\documentclass[12pt]{article}

\usepackage[utf8]{inputenc}
\usepackage{paralist}
\usepackage[hyphens]{url}   
\usepackage[colorlinks=true,linkcolor=blue,urlcolor=red,citecolor=blue]{hyperref}

\usepackage[toc,page]{appendix}

\usepackage{amsfonts,amsmath,amssymb,amsthm}
\usepackage{geometry}
\geometry{a4paper,margin=2.5cm}

\usepackage{caption}
\usepackage{subcaption}
\usepackage{marginnote}

\usepackage[dvips]{graphicx}
\graphicspath{{figures/}}
\usepackage{listings,color,pgf}

\usepackage{pgfplots}
\usepackage{tikz}
\usepgfplotslibrary{external}
\tikzexternalize[prefix=TikzPictures/ ]
\newlength\figureheight  
\newlength\figurewidth   

\usepackage{pgfplots}
\usepackage{multirow}

\definecolor{colKeys}{rgb}{0,0,1} 
\definecolor{colIdentifier}{rgb}{0,0,0} 
\definecolor{colComments}{rgb}{0,1,0.3} 
\definecolor{colString}{rgb}{0,0.5,0} 

\definecolor{dkgreen}{rgb}{0,0.6,0} 
\definecolor{gray}{rgb}{0.5,0.5,0.5} 
\definecolor{lightgray}{rgb}{0.9,0.9,0.9} 

\lstset{language=Matlab, 
   keywords={break,case,catch,continue,else,elseif,end,for,function, 
   global,if,otherwise,persistent,return,switch,try,while,ones,zeros}, 
   float=hbp, 
   basicstyle=\ttfamily\scriptsize, 
   identifierstyle=\color{colIdentifier}, 
   keywordstyle=\color{blue}, 
   commentstyle=\color{dkgreen}, 
   stringstyle=\color{cyan}, 
   columns=fixed, 
   tabsize=2, 
   frame=single, 
   numbers=none, 
   extendedchars=false, 
   numberstyle=\ttfamily\tiny, 
   stepnumber=1, 
   numbersep=5pt, 
   showspaces=false, 
   showstringspaces=false, 
   breakautoindent=true,
   xleftmargin=1mm,
   xrightmargin=1mm,
   backgroundcolor=\color{white},
   lineskip={0pt}} 
  
\usepackage{float}   
\newfloat{Code}{H}{myc}
   
\usepackage{todonotes}

\newcommand{\eps}{\varepsilon}
\newcommand{\R}{\mathbb{R}}
\newcommand{\Z}{\mathbb{Z}}
\newcommand{\cP}{\mathcal{P}}
\newcommand{\cDe}{\mathcal{D}^\eps}
\newcommand{\cPe}{\mathcal{P}^\eps}
\newcommand{\cF}{\mathcal{F}}
\newcommand{\bk}{\boldsymbol{k}}


\newcommand{\divg}{\mathop{\mathrm{div}}}

\newtheorem{lemma}{Lemma}
\newtheorem{remark}{Remark}

\title{Computing Coherent Sets\\ using the Fokker-Planck Equation}

\author{Andreas Denner \and Oliver Junge \and Daniel~Matthes\footnote{Center for Mathematics, Technische Universit\"at M\"unchen, 85747 Garching bei M\"unchen}}

\date{\today}

\begin{document}

\maketitle   

\begin{abstract}
We perform a numerical approximation of coherent sets in finite-dimensional smooth dynamical systems by computing singular vectors of the transfer operator for a stochastically perturbed flow. This operator is obtained by solution of a discretized Fokker-Planck equation. For numerical implementation, we employ spectral collocation methods and an exponential time differentiation scheme. We experimentally compare our approach with the more classical method by Ulam that is based on discretization of the transfer operator of the unperturbed flow.

\end{abstract}

\section{Introduction}

Many fluid flows at the onset of turbulence exhibit regions which disperse slowly with time (so called \emph{coherent sets} \cite{froyland_lloyd_santi_10,FSM10}) while other regions disaggregate comparatively quickly.  Often, the boundary of a slowly dispersing region can be associated to a lower-dimensional object (a so called \emph{Lagrangian coherent structure} \cite{haller_yuan_00,haller01,haller_11}) which serves as a transport barrier for Lagrangian particles within a coherent region \cite{froyland_padberg_09}.

Based on the related concept of \emph{almost invariant} (resp.\ \emph{metastable}) sets in time-invariant dynamical systems \cite{DeJu99a,froyland-dellnitz_03}, recently a framework has been proposed for the computation of coherent sets via singular vectors of a certain smoothed transfer operator which describes the evolution of probability densities on phase space under the given dynamics \cite{Fr13a}.  Numerically, this operator can be approximated by a Galerkin \cite{Li76a,DeJu99a} method via evaluating the flow map explicitly by time integration of the vector field.  Depending on the type of approximation space chosen, a diffusion operator has either to be applied explicitly or is already implicitly present in the discretization ( via numerical diffusion).

In this manuscript, we propose to compute coherent sets by directly solving the Fokker-Planck equation instead.  More precisely, instead of computing the evolution of the basis of our approximation space under the deterministic dynamics and then applying diffusion, we directly compute the evolution of this basis under the stochastic push forward operator given by the solution operator of the Fokker-Planck equation.  This advection-diffusion equation can efficiently be discretised using spectral collocation (cf.\ also \cite{FrJuKo13a}).  In order to deal with aliasing in the case of dominating advection we use a skew symmetric form of the advection term and in order to deal with stiffness in time due to the Laplace operator we employ an exponential time differentiation \emph{(etd)} integrator. 

As a key advantage of our method we only need to sample the vector field at each time instance on a fixed grid of rather coarse resolution.  In particular, we do not need to integrate  trajectories of (Lagrangian) particles and no interpolation of the vector field to points off the grid.  

\section{Problem statement}

We consider a time-dependent ordinary differential equation $\dot{x} = b(t,x)$, $b:\R\times X\to \R^d$, on some bounded domain $X\subset \mathbb{R}^d$.  We fix some initial and final time $t_0, t_1\in\R$ and assume that the vector field $b$ is continuous and locally Lipschitz w.r.t.\ $x$ for all $t\in [t_0,t_1]$, such that the associated flow map $\Phi=\Phi(\cdot,t_0,t_1):X\to X$ is uniquely defined. In order not to obscure the key ideas we restrict to the case of $X$ being a hyperrectangle and furthermore $b$ being periodic in $x$ and divergence free, i.e.\ the flow map $\Phi$ being volume preserving.

Roughly speaking, we would like to compute a set $A_0\subset X$ which disperses slowly under the evolution of $\Phi$, i.e.\ which roughly retains its shape while being moved around by the flow $\Phi$.  A such set $A_0$ will be called \emph{coherent} \cite{FSM10}.

Inspired by the associated notion of an \emph{almost invariant} (resp.\ \emph{meta\-stable}) set in the autonomous setting \cite{DeJu99a}, we start formalizing this request by asking for a pair of sets $A_0,A_1\subset X$ such that $A_0$ will approximately be carried to $A_1$ by $\Phi$ in the sense that
\begin{equation}
\begin{split}
\kappa(A_0,A_1):=\frac{m(A_0\cap \Phi^{-1}A_1)}{m(A_0)}\approx 1,
\end{split}
\label{eq:large_rho}
\end{equation}
where $m$ denotes Lebesgue (i.e.\ volume) measure.  Evidently, with $A_1=\Phi (A_0)$ we obtain $\kappa(A_0, A_1)=1$ for any $A_0\subset X$, so this is not a well defined problem yet.  In fact, (\ref{eq:large_rho}) does not impose any condition on the geometries of the sets $A_0$ and $A_1$. In particular, the image set $A_1=\Phi(A_0)$ might be stretched and folded all over the domain $X$ -- but this is not the type of coherent set we have in mind.  Froyland \cite{Fr13a} observed that $\kappa(A_0,\Phi(A_0))$ is not close to $1$ \emph{for every set} $A_0$ any more as soon as one artificially adds some random perturbation to the dynamics (cf.\ \cite{DeJu99a,FrDeJu01a} for related ideas in the autonomous context).

\section{Computing coherent sets via diffusion at initial and final time}
\label{Ccsbdd}
More formally, Froyland proposes the following approach \cite{Fr13a}, see also \cite{froyland2014almost}. We will employ the \emph{transfer operator} (resp. \emph{Frobenius-Perron operator} or \emph{push forward}) $\cP$ associated to $\Phi$.  This operator describes how densities $u:X\to [0,\infty)$ are  evolved by $\Phi$: If a set of points in $X$ is initially distributed according to some density $u$ then after flowing these points forward with $\Phi$ they will be distributed according to $\cP u$.  In our case of $\Phi$ being volume preserving, $\mathcal{P}:L^1={L}^1(X,m)\rightarrow{L}^1$ is given by
\[
\cP u = u\circ\Phi^{-1}.
\]
In the sequel, we restrict our attention to ${L}^2=L^2(X,m)\subset L^1$ as we want to use the canonical scalar product on ${L}^2$. 
We are going to add an $\eps$-small random perturbation to the flow map $\Phi$ via complementing the action of $\cP$ by a diffusion operator $\mathcal{D}^{\eps}:{L}^2\rightarrow{L}^2$,
\begin{equation}
\mathcal{D}^{\eps}f(y)= \int_{X}\alpha^\eps(y-x)f(x)dx
\end{equation}
where  $\alpha^{\eps}:X\rightarrow [0,\infty)$ is a bounded kernel with $\int_{X} \alpha^{\eps}(x)dx=1$ and $\alpha^\eps \rightarrow \delta_0$ in a distributional sense as $\varepsilon\rightarrow 0$. Here, we use a diffusion with bounded support of radius $\eps$, namely $\alpha^\eps(x)=1_{B_\eps(x)\cap X}/m(B_\eps(x)\cap X)$. 
With $\cP$ and $\cDe$, we finally define the evolution operator \cite{Fr13a} $\cPe:L^2\to L^2$,
\begin{equation*}
\cPe := \cDe\cP\cDe,
\end{equation*}
cf.\ Figure \ref{froyland_pic}.  Note that $\cPe$ is \emph{stochastic}, i.e. positive and \ $\cPe 1_X=1_X$, where $1_X$ denotes the characteristic function on $X$, since we assume $\Phi$ to be volume preserving.  Further, with this choice of $\alpha^\varepsilon$, $\cPe$ is compact and as the vector field $b$ is divergence-free has a simple leading singular value $\sigma_1=1$ \cite{DeJu99a,Fr13a}.

\begin{figure}
\centering \def\svgwidth{350pt}
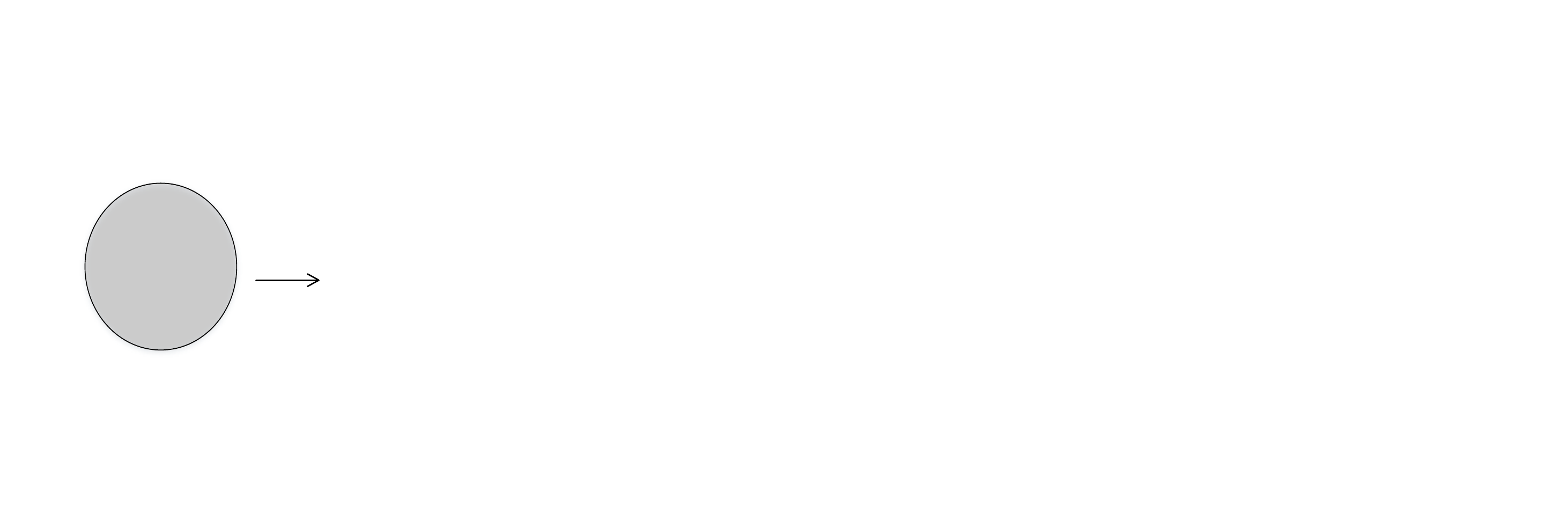 \label{picture_froyland}
\caption{Graphical illustration of the action of the perturbed transfer operator $\mathcal{P}^\eps$ as the composition of diffusion operators at initial and final time and the transfer operator $\mathcal{P}$.}
\label{froyland_pic}
\end{figure}

In order  to compute coherent sets, note that if $\kappa(A_0,A_1)\approx 1$, then $\kappa(A_0^c,A_1^c)\approx 1$ as well.  
Note further that 
$m(A_0\cap \Phi^{-1}A_1)
 =\langle 1_{A_0},1_{A_1}\circ  \Phi\rangle 
 =\langle \mathcal{P}1_{A_0},1_{A_1}\rangle$, where $\langle \cdot, \cdot \rangle$ is the standard inner product on $L^2$.
We therefore define 
\begin{align}
\label{ratio}
\rho_\eps(A_0,A_1):=\frac{\langle \mathcal{P}^\eps 1_{A_0},1_{A_1}\rangle}{m(A_0)}+   \frac{\langle \mathcal{P}^\eps 1_{A_0^c},1_{A_1^c}\rangle}{m(A_0^c)}
\end{align}
and look for some pair $(A_0,A_1)$ of sets which maximizes this quantity.
We get
\begin{align*}
\max_{A_0,A_1}\rho_\eps(A_0,{A_1})-1
&=\max_{A_0,A_1}\langle \mathcal{P}^\eps(c_0 1_{A_0}- c_0^{-1} 1_{A_0^c}), c_1 1_{A_1}-c_1^{-1} 1_{A_1^c}\rangle =: (\star),
\end{align*}
where $c_0 = \sqrt{m(A^c_0)/m(A_0)}$ and $c_1 = \sqrt{m(A^c_1)/m(A_1)}$ and since the functions $f:=c_0 1_{A_0}- c_0^{-1} 1_{A_0^c}$ and $g:= c_1 1_{A_1}-c_1^{-1} 1_{A_1^c}$ statisfy $\|f\|_2=\|g\|_2=1$ and $\langle f,1_X\rangle=\langle g,1_X\rangle=0$, we obtain, by relaxing to arbitrary functions $f,g$ with zero mean,
\begin{align}
(\star) & \leq 
\max_{\|f\|_2=\|g\|_2=1}          \left\{   {\langle \mathcal{P}^\eps f,g\rangle} : \langle f,1_X\rangle =\langle g,1_{X}\rangle=0 \right\}. \label{relaxed2}
\end{align}
This problem is much easier to solve than $(\star)$, where we need to maximize over characteristic functions.  As for fixed $f\in L^2$
\begin{align*}
\max_{\|g\|_2=1} \left\langle \mathcal{P}^\eps f,g\right\rangle = \left\langle \mathcal{P}^\eps f,\frac{\mathcal{P}^\eps f}{ \|\mathcal{P}^\eps f\|_{2}}\right\rangle 
 =\|\mathcal{P}^\eps f\|_{2}
 \end{align*}
and $\langle \mathcal{P}^\eps f,1_{X}\rangle=0$, we obtain
\begin{align*}
\label{relaxed3}
(\star) & \leq \max_{\|f\|_2=1} \left\{ \|\mathcal{P}^\eps f\|_{2} : \langle f,1_X\rangle =0 \right\} =\|\mathcal{P}^\eps\|_{L^2(V,m)},  
\end{align*}
where $V = \{f\in L^2 : \langle f,1_X\rangle=0\}$ is the orthogonal complement of $\text{span } 1_X$.

This operator norm is given by $\sigma_2(\mathcal{P}^\eps)$ \cite{Fr13a}, the second largest singular value of $\mathcal{P}^\eps$ and the maximizing function is $f=v_2$, the associated right singular function.  As $v_2$ is an approximation to $ c_0 1_{A_0}-c_0^{-1}1_{A_0^c}$ the common heuristics is to identify an associated coherent set by
\[
A_0:=\{x\in X:v_2(x) >\theta\}
\]
where $\theta\in \mathbb{R}$ is some appropriately chosen threshold \cite{DeJu99a}, \cite{froyland-dellnitz_03}. Correspondingly, $u_2=P^\eps v_2$ is the associated left singular function and $A_1=\{x\in X:u_2(x) >0\}$.

To sum up, we can compute a pair $A_0, \ A_1$ of coherent sets via computing the \emph{second singular value} and its corresponding singular vectors of a slightly perturbed Frobenius-Perron operator. Often a low order spatial discretisation such as Ulam's method is used \cite{ulam2004problems}, which is a Galerkin projection on indicator functions of a partition of $X$ into boxes $X_i$.  Usually $P$ is then computed numerically via sampling $K$ test points $x_i$ in every box $X_i$, compute $\Phi(x_i)$ and count how many fall in $B_j$:
\begin{align*}
P_{ji}=\frac{\# \Phi(x_i)\in X_j}{K}.
\end{align*}
The method automatically adds sufficient numerical diffusion so that we can actually directly employ the unperturbed operator $\mathcal{P}$.  However, with increasing resolution this numerical diffusion decreases which results in all singular values approaching the value one, cf.\ \cite{junge2004uncertainty,Fr13a}.

\section{Computing coherent sets via time-continuous diffusion}
\label{sec:FokkerPlanck}
  
Instead of explicitly applying diffusion at the beginning and the end of the time  interval under consideration, we propose to incorporate a small random perturbation continuously in time, i.e.\ instead of considering a deterministic differential equation we now use the stochastic differential equation 
\begin{equation}
\label{SDE}
dx = b(t,x)dt + \eps dB
\end{equation}
in order to define the flow map $\Phi$.  Here, $(B_t)_{t\geq 0}$ is $d$-dimensional Brownian motion and $\eps >0$.
Since we assume $b(t,\cdot)$ to be Lipschitz, $X$ to be bounded and $b$ to be periodic in $x$, for any initial condition $\xi\in X$,  \eqref{SDE} has a unique continuous solution $x$ in the sense of \cite{oksendal2003stochastic}, Thm. 5.2.1.

\begin{figure}[t]
\centering \def\svgwidth{270pt}
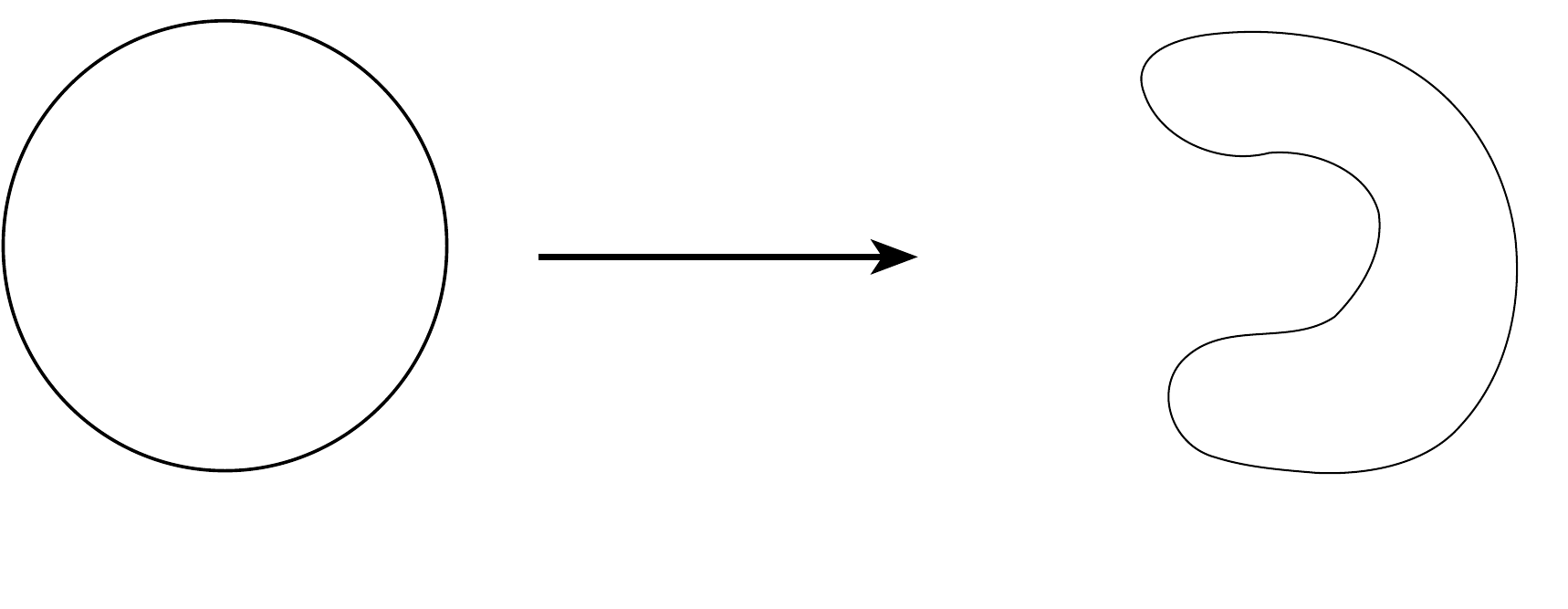 \label{picture_fokker_planck}
\caption{In contrast to the approach using diffusion at initial and final time cf.  \ref{froyland_pic}, in our approach diffusion is built into the model of the dynamical system.}
\end{figure}

The transfer operator $\mathcal{P}^\eps$ associated to this stochastic differential equation is given by the solution operator of the 
parabolic \emph{Fokker-Planck equation}
\begin{equation}
\label{FokkerPlanck}
\partial_t u = L^\eps u:=\tfrac{\eps^2}{2} \Delta u - \divg(ub).
\end{equation}
Appropriate boundary conditions are chosen (e.g., periodic or homogeneous Neumann boundary conditions), 
so that for all $u,w\in L^2(X)$ in the domain of $L^\eps$ holds:
\begin{equation}
  \label{eq:1}
  \langle w, L^\eps u\rangle = -\int_X\nabla u\cdot\nabla w + \int_X u\,b\cdot\nabla w.
\end{equation}
More precisely, $\cPe u_0 = u(t_1,\cdot)$, where $u$ is the solution to (\ref{FokkerPlanck}) with initial condition $u(t_0,\cdot)=u_0$.  

\begin{lemma}
\label{lemma:Pe_compact}
If
\[
\|b\|_{C^1}=\sup_{s\in [t_0,t_1]}\sup_{x\in X}\max\left\{|b(s,x)|, |\partial_{x_1} b(s,x)|,\ldots,|\partial_{x_d} b(s,x)|\right\}<\infty,
\]
then $\mathcal{P}^\eps:{L}^2(X)\rightarrow {L}^2(X)$ is compact.
\end{lemma}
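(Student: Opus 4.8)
The plan is to exploit the smoothing effect of the parabolic evolution \eqref{FokkerPlanck}: although $\mathcal{P}^\eps$ is merely bounded on $L^2(X)$, for $t_1>t_0$ it actually maps $L^2(X)$ boundedly into $H^1(X)$, and since $X$ is a bounded hyperrectangle the embedding $H^1(X)\hookrightarrow L^2(X)$ is compact (Rellich--Kondrachov; under the periodic boundary conditions used here this is just the elementary compactness of $H^1(\mathbb{T}^d)\hookrightarrow L^2(\mathbb{T}^d)$). As the composition of a bounded operator $L^2(X)\to H^1(X)$ with this compact embedding is compact, it suffices to establish the bound $\|\mathcal{P}^\eps u_0\|_{H^1(X)}\le C\,\|u_0\|_{L^2(X)}$, which I would obtain from energy estimates for \eqref{FokkerPlanck}.

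First I would recall that, since $\|b\|_{C^1}<\infty$ and $b$ is periodic in $x$, the standard variational theory of linear parabolic equations provides, for every $u_0\in L^2(X)$, a unique weak solution $u\in C([t_0,t_1];L^2(X))\cap L^2(t_0,t_1;H^1(X))$ with $\partial_t u\in L^2(t_0,t_1;H^{-1}(X))$, and $\mathcal{P}^\eps u_0=u(t_1,\cdot)$. Testing \eqref{FokkerPlanck} with $u$ and integrating by parts (cf.\ \eqref{eq:1}) gives $\tfrac12\tfrac{d}{dt}\|u\|_{L^2}^2=-\tfrac{\eps^2}{2}\|\nabla u\|_{L^2}^2+\int_X u\,b\cdot\nabla u$; a further integration by parts turns the drift term into $-\tfrac12\int_X(\divg b)\,u^2$, which is bounded by $\tfrac{d}{2}\|b\|_{C^1}\|u\|_{L^2}^2$. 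Gr\"onwall's inequality then gives $\|u(t)\|_{L^2}\le e^{C(t-t_0)}\|u_0\|_{L^2}$ and, integrating the differential inequality over $[t_0,t_1]$, also $\eps^2\int_{t_0}^{t_1}\|\nabla u(t)\|_{L^2}^2\,dt\le C\,\|u_0\|_{L^2}^2$, with $C=C(\eps,\|b\|_{C^1},t_1-t_0)$.

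The decisive step is a time-weighted estimate that promotes this space--time bound to a pointwise $H^1$ bound at $t=t_1$. Formally, testing \eqref{FokkerPlanck} with $-\Delta u$ and writing $\divg(ub)=b\cdot\nabla u+u\,\divg b$ gives $\tfrac12\tfrac{d}{dt}\|\nabla u\|_{L^2}^2=-\tfrac{\eps^2}{2}\|\Delta u\|_{L^2}^2+\langle\Delta u,\;b\cdot\nabla u+u\,\divg b\rangle$; estimating the last term by $c\,\|b\|_{C^1}\|\Delta u\|_{L^2}\big(\|\nabla u\|_{L^2}+\|u\|_{L^2}\big)$ and absorbing $\|\Delta u\|_{L^2}^2$ through Young's inequality leaves $\tfrac{d}{dt}\|\nabla u\|_{L^2}^2\le C\big(\|\nabla u\|_{L^2}^2+\|u\|_{L^2}^2\big)$. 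Multiplying by the weight $(t-t_0)$, integrating over $[t_0,t_1]$ and invoking the two bounds of the previous step, one obtains $(t_1-t_0)\,\|\nabla u(t_1)\|_{L^2}^2\le C\,\|u_0\|_{L^2}^2$. Together with the $L^2$ bound this is exactly $\|\mathcal{P}^\eps u_0\|_{H^1(X)}\le C\,\|u_0\|_{L^2(X)}$, and Rellich--Kondrachov concludes the argument.

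I expect the only genuine technical point to be the rigorous justification of the time-weighted estimate: the weak solution of the first step need not a priori be regular enough (i.e.\ $u(t)\in H^2(X)$) to be tested against $-\Delta u$. The standard remedy is to carry out all the estimates on a spectral Galerkin approximation of \eqref{FokkerPlanck} in the eigenbasis of $\Delta$ (with the chosen boundary conditions), where every manipulation is legitimate and all constants are independent of the truncation, and then pass to the limit using weak lower semicontinuity of the $L^2$- and $H^1$-norms. Equivalently, one may read the $H^1$ bound off the smoothing estimate $\|e^{\sigma\frac{\eps^2}{2}\Delta}\|_{L^2\to H^1}\le C\,\sigma^{-1/2}$ for the analytic semigroup of the principal part and bootstrap through the Duhamel representation of the non-autonomous evolution family $\mathcal{P}^\eps=U(t_1,t_0)$, using that $\divg(\,\cdot\,b(t,\cdot))\colon H^1(X)\to L^2(X)$ is bounded uniformly in $t$. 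Alternatively, one can bypass the $H^1$ route entirely: for fixed $\eps>0$, \eqref{FokkerPlanck} is uniformly parabolic on the bounded set $X$, so its fundamental solution $p(t_1,t_0;x,y)$ is bounded, hence $\mathcal{P}^\eps$ is an integral operator with kernel in $L^2(X\times X)$ and therefore Hilbert--Schmidt, in particular compact. In each of these routes the remaining work --- collecting the constants, checking the admissibility of test functions at the Galerkin level, and invoking the compact embedding --- is routine.
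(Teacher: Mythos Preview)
Your proposal is correct and follows essentially the same route as the paper: derive the $L^2$ energy estimate via Gr\"onwall, integrate it to control $\int_{t_0}^{t_1}\|\nabla u\|_{L^2}^2\,dt$, upgrade this to a pointwise bound $\|\nabla u(t_1)\|_{L^2}\le C\|u_0\|_{L^2}$ using the differential inequality obtained by testing against $-\Delta u$, and conclude by Rellich. The only (minor) difference is the device used for the upgrade: the paper extracts a time $t^*$ with small $\|\nabla u(t^*)\|_{L^2}$ by the mean value theorem and then integrates the differential inequality from $t^*$ to $t_1$, whereas you multiply by the weight $(t-t_0)$ before integrating; both are standard and equivalent here, and your explicit remark that the formal testing against $-\Delta u$ should be justified via Galerkin approximation is a point the paper leaves implicit.
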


A proof of this claim is given in the appendix.  Note that with Schauder's theorem also $\mathcal{P}^{\eps,*}$ is compact. Since we assumed $b$ to be divergence free, we also have:

\begin{lemma}
\label{lemma:Pe_stochastic}
$\mathcal{P}^\eps:L^2\to L^2$ and $\mathcal{P}^{\eps,*}:L^2\to L^2$ are stochastic.
\end{lemma}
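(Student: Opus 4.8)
The plan is to verify, for $\mathcal{P}^\eps$ and then for $\mathcal{P}^{\eps,*}$, the two properties that by definition make an operator stochastic: positivity and fixing the constant $1_X$. The only structural input will be $\divg b\equiv 0$. For $\mathcal{P}^\eps 1_X=1_X$ I would simply note that, since $b$ is divergence free, $L^\eps 1_X=\tfrac{\eps^2}{2}\Delta 1_X-\divg(1_X\,b)=-\divg b=0$, and $1_X$ lies in the domain of $L^\eps$ under periodic or homogeneous Neumann boundary conditions; hence the constant function $u(t,\cdot)\equiv 1_X$ is the solution of \eqref{FokkerPlanck} with initial datum $1_X$, and the uniqueness built into the definition of $\mathcal{P}^\eps$ gives $\mathcal{P}^\eps 1_X=1_X$.

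For the adjoint I would first identify its generator: integrating by parts once more in \eqref{eq:1} yields $\langle w,L^\eps u\rangle=\langle \tfrac{\eps^2}{2}\Delta w+b\cdot\nabla w,\,u\rangle$, so the time-dependent formal adjoint of $L^\eps$ is the backward Kolmogorov operator $L^{\eps,*}w=\tfrac{\eps^2}{2}\Delta w+b\cdot\nabla w$, and $\mathcal{P}^{\eps,*}w=v(t_0,\cdot)$ where $v$ solves the terminal-value problem $-\partial_s v=L^{\eps,*}v$, $v(t_1,\cdot)=w$ (well posed and uniformly parabolic once time is reversed). Since $L^{\eps,*}1_X=\tfrac{\eps^2}{2}\Delta 1_X+b\cdot\nabla 1_X=0$, the constant $v\equiv 1_X$ solves this problem with terminal datum $1_X$, so $\mathcal{P}^{\eps,*}1_X=1_X$. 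Equivalently, this last identity follows by duality from $\mathcal{P}^\eps 1_X=1_X$ together with the mass conservation $\langle\mathcal{P}^\eps u,1_X\rangle=\langle u,1_X\rangle$, which is itself a consequence of $\int_X L^\eps u=0$ (divergence theorem and the boundary conditions).

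For positivity I would invoke the weak parabolic maximum principle: both \eqref{FokkerPlanck} and its time-reversed backward counterpart are uniformly parabolic with principal part $\tfrac{\eps^2}{2}\Delta$ and coefficients bounded on $[t_0,t_1]\times X$ (using $\|b\|_{C^1}<\infty$, as in Lemma \ref{lemma:Pe_compact}), so nonnegative initial, respectively terminal, data give nonnegative solutions and $\mathcal{P}^\eps,\mathcal{P}^{\eps,*}$ preserve the positive cone of $L^2$. A fully equivalent route is probabilistic, writing $\mathcal{P}^\eps u_0=\int_X p(t_0,x;t_1,\cdot)\,u_0(x)\,dx$ and $\mathcal{P}^{\eps,*}w=\int_X p(t_0,\cdot;t_1,y)\,w(y)\,dy$ with $p\ge 0$ the transition density of \eqref{SDE}.

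I expect the only delicate point to be the bookkeeping for the non-autonomous adjoint: keeping track of the time reversal, confirming that $L^{\eps,*}$ generates the dual evolution family on $L^2$, and checking that the maximum principle really is available under the chosen boundary conditions. This is all standard linear parabolic theory and is already implicit in the functional-analytic set-up behind Lemma \ref{lemma:Pe_compact}, so I anticipate no genuine obstruction; every remaining step reduces to the identity $\divg b=0$.
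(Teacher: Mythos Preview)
Your proposal is correct, and for the preservation of $1_X$ it coincides with the paper: $L^\eps 1_X=0$ gives $\mathcal{P}^\eps 1_X=1_X$, and the duality/mass-conservation argument you mention as an alternative is exactly what the paper uses for $\mathcal{P}^{\eps,*}1_X=1_X$.

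The genuine difference is in the positivity argument. You invoke the parabolic maximum principle (or the transition-density representation) for both the forward and backward equations. The paper instead gives a self-contained Stampacchia-type estimate: testing \eqref{FokkerPlanck} against the negative part $u_-$ and using only the integration-by-parts identity \eqref{eq:1} together with $\divg b=0$, one obtains $\tfrac12\partial_t\int_X u_-^2=-\int_X|\nabla u_-|^2\le 0$, so a nonnegative initial datum stays nonnegative. Positivity of $\mathcal{P}^{\eps,*}$ is then deduced purely by duality, via $\langle \mathcal{P}^{\eps,*}w,u\rangle=\langle w,\mathcal{P}^\eps u\rangle\ge 0$ for all $u\ge 0$, without ever writing down the backward equation. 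Your route is perfectly valid and conceptually clean, but it imports an external maximum-principle theorem and forces you to handle the non-autonomous adjoint evolution explicitly (the ``delicate bookkeeping'' you flag). The paper's route avoids both issues: it needs nothing beyond \eqref{eq:1} and the divergence-free assumption, and the adjoint is treated by a two-line duality argument.
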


\begin{proof}
We set $\varepsilon=\sqrt{2}$ without loss of generality. 
Since $L^\eps 1_X = \Delta 1_X-\divg(1_Xb) =\divg b=0$, 
it follows that $1_X$ is a steady state of \eqref{FokkerPlanck}, 
and consequently $\mathcal{P}^{\eps}1_X=1_X$. 

For the adjoint operator $\mathcal{P}^{\eps,*}$ we test with $u\in{L}^2(X)$:
\begin{align*}
\langle \mathcal{P}^{\eps,*}1_X,u\rangle&=\langle1_X, \mathcal{P}^\eps u\rangle
= \int_X \mathcal{P}^\eps u=\int_X u 
=\int_X u 1_X =\langle 1_X,u\rangle,
\end{align*}
where we have used that $\mathcal{P}^\eps$ is integral conserving:
\begin{align*}
  \partial_t\int_X u
  =\int_X L^\eps u= \langle 1_X,L^\eps u\rangle 
  =-\int_X \nabla u\cdot\nabla 1_X + \int_X  u\,b\cdot\nabla 1_X =0,
\end{align*}
thanks to the integration-by-parts rule \eqref{eq:1}.
Hence $\int_X u=\int_X \mathcal{P}^\eps u$ for all $u\in {L}^2(X)$, cf.\ \cite{lasota1993chaos}. 
By the Riesz representation theorem, we can conclude that also $\mathcal{P}^{\eps,*}1_X=1_X$. 

For proving positivity of $\mathcal{P}^\eps$, 
we consider the evolution of the negative part $u_-(s,x)=-\min(u(s,x), 0)$.
Since $\partial_t (u_-^2)= - 2u_-\partial_t u$ almost everywhere on $X\times(t_0,t_1)$,
we have, using the integration-by-parts rule \eqref{eq:1},
\begin{equation}
  \begin{split}
    \frac{1}{2}\partial_t\int_X u_-^2&=
    -\int_X u_-\partial_t u= -\int_X u_-L^\eps u \\
    &=\int_X \nabla u\cdot\nabla u_- - \int_X u\,b\cdot\nabla u_-
    =-\int_X |\nabla u_-|^2 + \frac{1}{2} \int_X b\cdot \nabla (u_-^2) \\
    &=-\int_X |\nabla u_-|^2 - \frac{1}{2} \int_X \divg(b) \ u_-^2\\
    &=-\int_X |\nabla u_-|^2\leq 0. 
  \end{split}
  \label{eq:normdecreasing}
\end{equation}
Hence if $u(t_0,\cdot)$ is non-negative, $u(t,\cdot)$ is non-negative for all $t>t_0$, 
as the norm of its negative part does not increase. 
To show positivity of $\mathcal{P}^{\eps,*}$, let two non-negative functions $u,w\in L^2(X)$ be given.
Then
\begin{align*}
  \langle \mathcal{P}^{\eps,*}w,u\rangle = \langle w,\mathcal{P}^\eps u\rangle \ge 0
\end{align*}
by positivity of $\mathcal{P}^\eps$.
For any fixed non-negative $w$, this relation holds for all non-negative $u$.
This implies that $\mathcal{P}^{\eps,*}w$ is non-negative.
\end{proof}

\begin{lemma}
The leading singular value $\sigma_1=1$ of $\mathcal{P}^\eps$ is isolated.
\end{lemma}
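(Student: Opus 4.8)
Since $\mathcal{P}^\eps$ is compact by Lemma~\ref{lemma:Pe_compact}, the operator $T:=\mathcal{P}^{\eps,*}\mathcal{P}^\eps$ is compact, self-adjoint and non-negative, and its eigenvalues are exactly the squares of the singular values of $\mathcal{P}^\eps$. The plan is therefore: (i) show that $\mathcal{P}^\eps$ is a contraction on $L^2(X)$, which identifies $\sigma_1=1$ as the top of the spectrum of $T$, attained at $1_X$; (ii) read off isolatedness from the spectral theorem for $T$; and (iii) if one wants the sharper statement, upgrade to simplicity of $\sigma_1$ (hence a genuine gap $\sigma_2<1$) by inspecting the equality case in (i).

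For (i) I would argue exactly as in the proof of Lemma~\ref{lemma:Pe_stochastic}: set $\eps=\sqrt2$, let $u(t,\cdot)$ solve \eqref{FokkerPlanck} with $u(t_0,\cdot)=u$, and run the computation from \eqref{eq:normdecreasing} for $u$ itself instead of its negative part, using \eqref{eq:1} with $w=u(t,\cdot)$ and $\divg b=0$; this gives $\tfrac12\,\partial_t\|u(t,\cdot)\|_2^2=-\|\nabla u(t,\cdot)\|_2^2\le 0$, hence $\|\mathcal{P}^\eps u\|_2=\|u(t_1,\cdot)\|_2\le\|u\|_2$. Since $\mathcal{P}^\eps 1_X=1_X$ (Lemma~\ref{lemma:Pe_stochastic}), it follows that $\|\mathcal{P}^\eps\|_{L^2\to L^2}=1$, so the largest eigenvalue of $T$ equals $1$ and is attained at $T1_X=\mathcal{P}^{\eps,*}1_X=1_X$; thus $\sigma_1=1$.

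For (ii): because $T$ is compact and self-adjoint, its spectrum is $\{0\}$ together with a finite or countable set of eigenvalues of finite multiplicity whose only possible accumulation point is $0$. In particular the nonzero value $1=\sigma_1^2$ is an isolated point of $\sigma(T)$, which is the claim. Step (iii) is an add-on: if $Tf=f$ then $\|\mathcal{P}^\eps f\|_2^2=\langle Tf,f\rangle=\|f\|_2^2$, so equality must hold throughout the estimate in (i) along the trajectory issuing from $f$, i.e.\ $\int_{t_0}^{t_1}\|\nabla u(t,\cdot)\|_2^2\,dt=0$; hence $u(t,\cdot)$ is spatially constant for a.e.\ $t$, and letting $t\downarrow t_0$ in $L^2$ together with conservation of $\int_X u$ forces $f\in\R\,1_X$. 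So the eigenspace of $T$ at $1$ is one-dimensional, $\sigma_1=1$ is simple, and $\sigma_2<1$.

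I expect the only delicate point to be the rigorous justification underlying (i) and (iii): one must know that the solution $u(t,\cdot)$ lies in $H^1(X)$ for $t>t_0$ (parabolic smoothing), so that the integration-by-parts identity \eqref{eq:1} actually applies, and that $t\mapsto u(t,\cdot)$ is continuous in $L^2$ up to the initial time, so that the passage $t\downarrow t_0$ in (iii) is legitimate — both of which belong to the well-posedness theory behind Lemma~\ref{lemma:Pe_compact}. For the literal statement (isolatedness without simplicity), step (iii) can simply be omitted, as (ii) already delivers it.
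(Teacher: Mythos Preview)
Your proposal is correct and follows essentially the same route as the paper: the same energy identity $\tfrac12\,\partial_t\|u\|_2^2=-\|\nabla u\|_2^2$ is used to see that $\|\mathcal{P}^\eps u\|_2<\|u\|_2$ for every non-constant $u$, which forces the eigenspace of $\mathcal{P}^{\eps,*}\mathcal{P}^\eps$ at $1$ to be $\R\,1_X$. The only difference is organizational: you note that isolatedness of $1$ is automatic from compactness of $T$ (so your step (ii) already proves the lemma as literally stated) and treat simplicity as the optional sharpening (iii), whereas the paper argues the strict contraction/simplicity directly and infers isolatedness from that; your handling of the $t\downarrow t_0$ limit in (iii) is in fact a bit more careful about initial regularity than the paper's phrasing.
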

\begin{proof}
With the same argument as in \eqref{eq:normdecreasing} we obtain
for solutions to \eqref{FokkerPlanck}:
\begin{align*}
\frac{1}{2}\partial_t \int u^2\leq - \int|\nabla u|^2.
\end{align*}
This implies that $\|u(t,\cdot)\|_2$ is non-increasing in time.
Moreover, if the initial datum $u$ is not constant on $X$, its gradient does not vanish almost everywhere,
and hence $\|u(t,\cdot)\|_2$ decreases on a small time interval $(t_0,t_0+\tau)$.
Thus $\|\mathcal{P}^\eps u\|_2<\|u\|_2$, and $u$ cannot be an eigenfunction of $\mathcal{P}^\eps$ corresponding to the eigenvalue $1$. With the same argument the constant function is the only eigenfunction of $\mathcal{P}^{\eps,*}$ corresponding to the eigenvalue 1. Hence the leading singular value $\sigma_1=1$, which is an eigenvalue of $\mathcal{P}^{\eps, *}\mathcal{P}^{\eps}$ is isolated.

\end{proof}
Hence $\mathcal{P}^\eps$ is compact and doubly stochastic with isolated, simple leading singular value $\sigma_1$. So we are in a similar setting as in section \ref{Ccsbdd} and  apply the same constructions.

\begin{remark}
These considerations also work for a non-conservative vector field $b$ and  corresponding probability measure $\mu\neq m$ by suitably normalizing the operator $\cP$ resp.\ $\cP^\eps$, cf.\ \cite{Fr13a}.
\end{remark}

\section{Discretisation}

In order to approximate the transfer operator $\cP^\eps$, we choose a finite dimensional approximation space $V_N \subset {L}^2(X)$ and use collocation. As the Fokker-Planck equation \eqref{FokkerPlanck} is parabolic and since we assume the vector field $b(t,\cdot)$ to be smooth for all $t$, its solution $u(t,\cdot)$ is smooth for all $t>t_0$ (see e.g. \cite{evans2010partial}, Ch. 7, Thm. 7).  To exploit this, we choose $V_N$ as the span of the Fourier basis 
\begin{align*}
\varphi_{\bk}(x) = e^{i \langle\bk, x\rangle}, \quad \bk\in\Z^d, \|\bk\|_\infty\leq (N-1)/2, N \text{ odd}. 
\end{align*}
Note that $\text{dim}(V_N)=N^d$. Choosing a corresponding set $\{x_1,\ldots,x_M\}\subset X$ of collocation points (typically on an equidistant grid), the entries of the matrix representation $P^\eps$ of $\cPe$ are given by
\begin{align}\label{eq:Pe}
P^\eps_{j\bk}= \cPe\varphi_{\bk}(x_j), 
\end{align}
where $\bk\in\Z^d, \|\bk\|_\infty\leq (N-1)/2$ and $j=1,\ldots,M$.  For $P^\eps$, we then compute singular values and vectors via standard algorithms. Note that we might choose $M\geq N$, i.e.\ more collocation points than basis functions.  This turns out to be useful since we expect the maximally coherent sets to be comparatively coarse structures which can be captured with a small number of basis functions.

\paragraph{Solving the Fokker-Planck equation.}

In order to compute $\cPe\varphi_{\bk}$ in (\ref{eq:Pe}) for some basis function $\varphi_{\bk}\in V_N$ we need to solve the Fokker-Planck equation \eqref{FokkerPlanck} with initial condition $u(t_0,\cdot)=\varphi_{\bk}$.  This can efficiently be done  in Fourier space via integrating the Cauchy problem
\begin{align*}
\partial_t \hat u &=\tfrac{\eps^2}{2} \hat \Delta \hat u - \hat\divg \mathcal{F}(\mathcal{F}^{-1}(\hat{u}) b),\quad \hat u(t_0,\cdot)=\hat\varphi_{\bk},
\end{align*}
in time, where $\hat v = \mathcal{F}(v)$ is the Fourier transform of $v\in V_N$.  Note that the differential operators in Fourier space reduce to multiplications with diagonal matrices, while $\mathcal{F}$ and $\mathcal{F}^{-1}$ can efficiently be computed by the (inverse) fast Fourier transform.

\paragraph{Aliasing.}

One problem with this formulation is the possible occurence of aliasing. As $\hat u$ and $\hat b$ are trigonometric polynomials of degree $N$, the multiplication $\cF^{-1}(\hat u)b$ in the advection term leads to a polynomial $\mathcal{F}^{-1}(\hat u) b$ of degree $2N$ that cannot be represented in our approximation space $V_N$. The coefficients of degree $\geq N$ of this polynomial act on the coefficients of lower degree, leading to unphysical contributions in these. This shows up in high oscillations and blow ups (see \cite{boyd2001chebyshev}, Ch. 11) in the computed solution.  One way to deal with this problem is to use the advection term 
\begin{align*}
\divg \left(  b u  \right) = \frac{1}{2} \divg \left(b u\right)+ \frac{1}{2} \left( b \nabla u   \right).
\end{align*}
The spectral discretization of the left-hand side is not skew symmetric, but the discretization of the right-hand side is \cite{zang1991rotation}. This leads to purely imaginary eigenvalues of the resulting discretization matrix and hence to mass conservation.  Consequently, for the unperturbed operator $(\eps=0)$, the resulting matrix has eigenvalues on the unit circle. However, this approach has to be used carefully as even though the solution does not blow up, it might still come with a large error, e.g. small scale structures may be suppressed. If the system produces such small scale structures the grid has to be chosen fine enough to resolve them. 

\paragraph{Time integration.}

For low resolutions, the time integration of the space discretized system can be performed by a standard explicit scheme. For higher resolutions, the stiffness of the system due to the Laplacian becomes problematic and a more sophisticated method must be employed.  Here, we use the \emph{exponential time differentiation scheme} \cite{cox2002exponential} for the space discretized system. The etd-scheme separates the diffusion term $\mathcal{L}=\tfrac{\eps^2}{2}D$, where $D$ is the discretized Laplacian, from the advection term $\mathcal{N}(u,t)=- \divg \mathcal{F}(\mathcal{F}^{-1}(\hat u) b(\cdot,t))$, where $b$ is evaluated via spectral collocation. The system can hence be written as
\begin{align}
u_t=\mathcal{L}u+\mathcal{N}(u,t).
\label{etd1}
\end{align}
Via multiplying \eqref{etd1} with $e^{-t\mathcal{L}}$ and integrating from $t_0$ to $t_1$ we obtain
\begin{align}
u(t_1)=e^{h\mathcal{L}}u(t_0)+e^{h\mathcal{L}}\int_0^h e^{-\tau\mathcal{L}} \mathcal{N}(u(t_0+\tau), t_0+\tau)d\tau,
\label{etd2}
\end{align}
with $h=t_1-t_0$.  A numerical scheme is derived by approximating the integral in \eqref{etd2}, e.g.\ by a Runge-Kutta 4 type rule, resulting in scheme called \emph{etdrk4}. To this end note that $D$ is a diagonal matrix. We use the version in  \cite{kassam2005fourth}, which elegantly treats a cancellation problem occurring in a naive formulation of \emph{etdrk4} by means of a contour integral approximated by the trapezoidal rule.

\paragraph{Extraction of coherent sets}
For the extraction of coherent sets several methods can be used. For the decomposition into exactly two coherent sets a simple thresholding or an a posteriori line search can be used \cite{Fr13a,froyland-dellnitz_03}, for the decompositition of the domain into $n$ coherent sets the first $n$ singular vectors should be considered (see \cite{DeJu99a, schutte1999direct, huisinga2006metastability} for the autonomous case) and post processed via a simple clustering heuristic, e.g. k-means, see \cite{banisch2016understanding, hadjighasem2015spectral}. We here focus on the computation of singular vectors.

\section{Numerical Experiments}

\subsection{Quadruple gyre}

The first numerical example is a two dimensional flow (cf.\ Fig.~\ref{vectorfield}), an extension of the well known double gyre flow, given by
\begin{align*}
\label{quadruple gyre}
\dot{x}&=-g(t,x,y)    \\
\dot{y}&= g(t,y,x)  
\end{align*}
on the $2$-torus $[0,2]\times [0,2]$, where
\[
g(t,x,y) = \pi\sin(\pi f(t,x))\cos (\pi f(t,y))\partial_x f(t,y)
\]
and $f(t,x)=\delta \sin (\omega t)x^2+(1-2\delta \sin(\omega t))x$. We fix $ \delta=0.25$,  $\omega=2\pi$, $t_0=0$, $t_1=10.25$, $h = 0.205$ (i.e.\ $50$ time steps) and choose $\eps=0.02$ in such a way that the six largest singular values of $P^\eps$ roughly equal those  obtained from Ulam's method (without explicit diffusion).

\begin{figure}
\begin{center}
\includegraphics[width=0.25\textwidth]{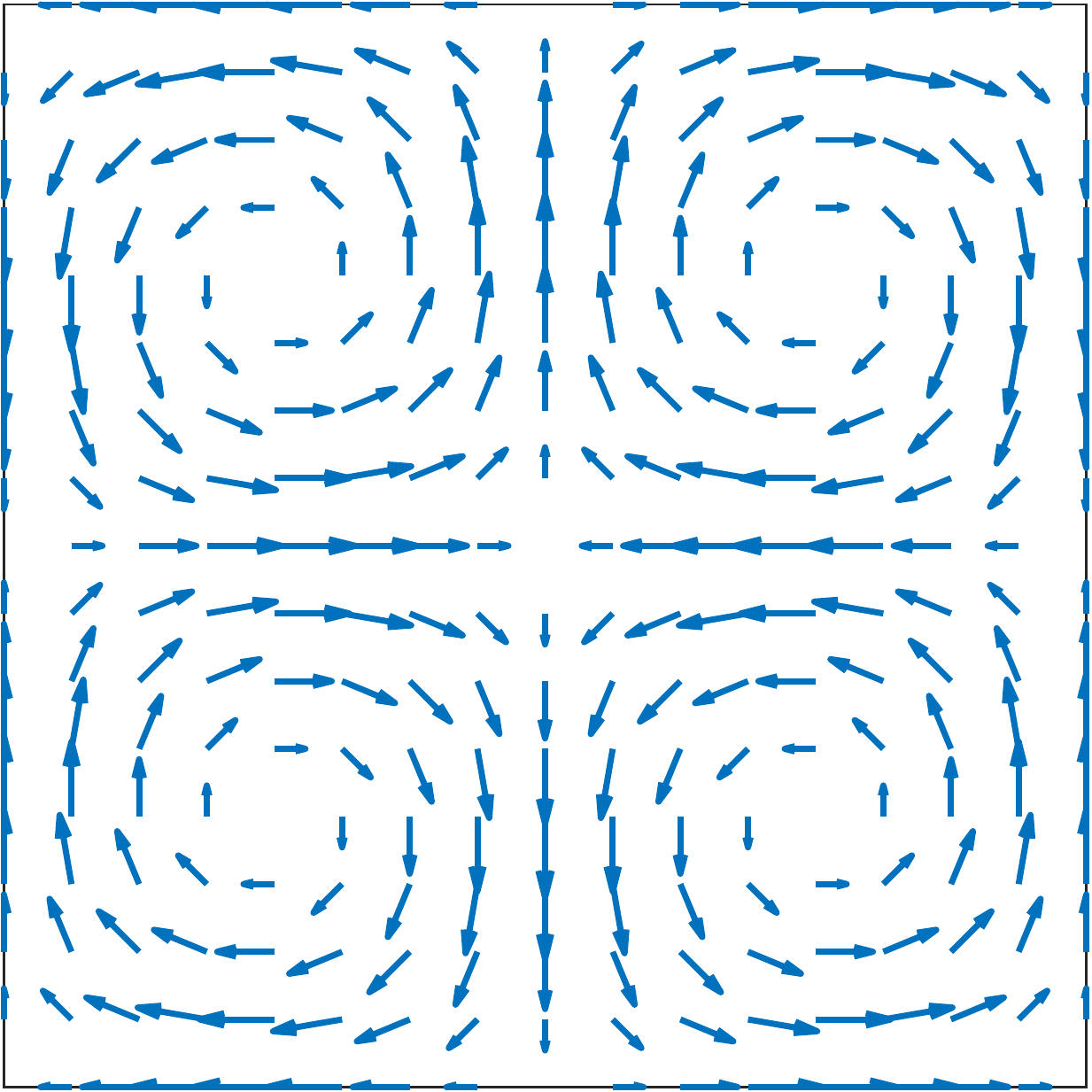}
\quad
\includegraphics[width=0.25\textwidth]{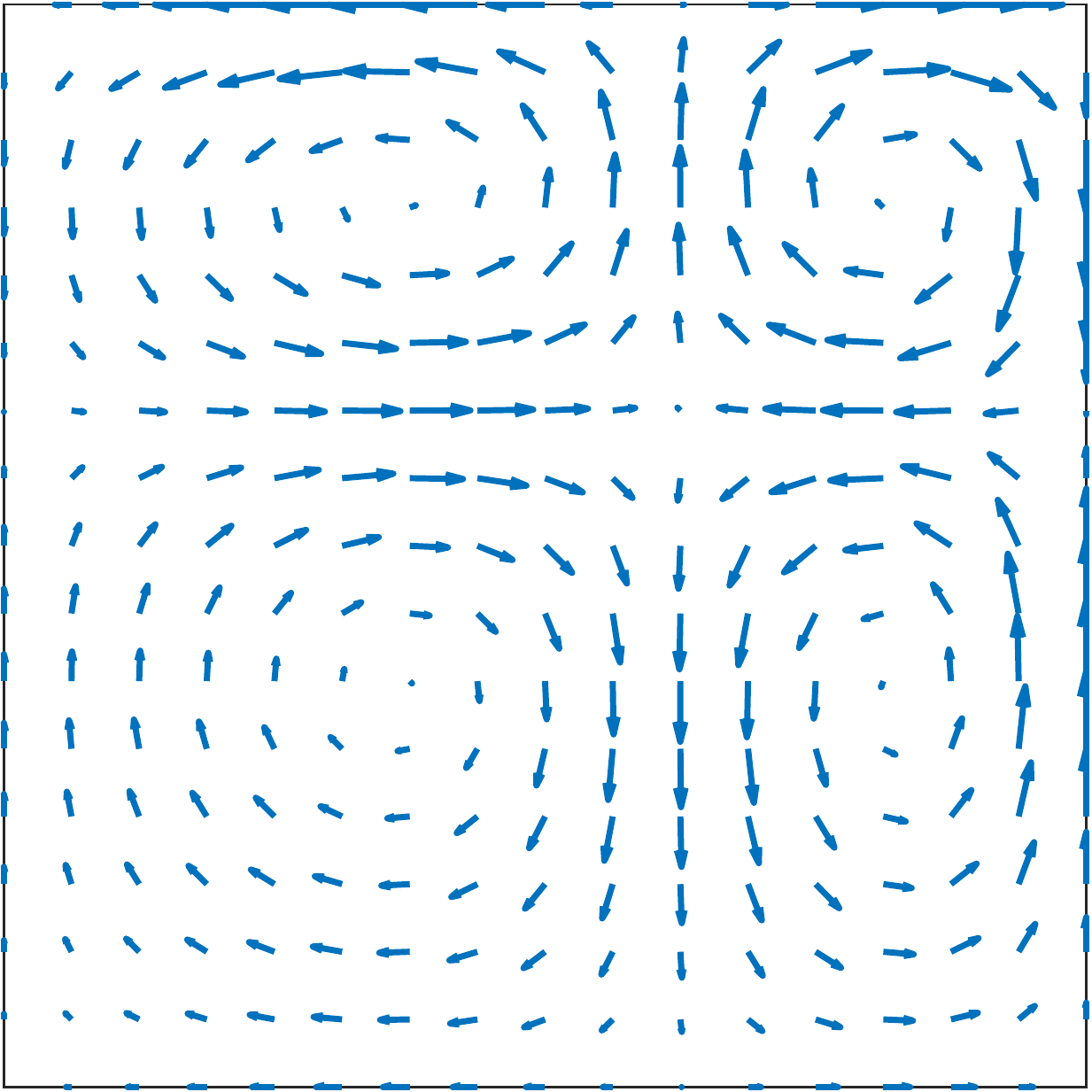}
\end{center}
\caption{Quadruple gyre vector field at $t=0$ and $t=10.25$. The four gyres are separated by a horizontal and a vertical line, such that their intersection point moves on the diagonal.}
\label{vectorfield}
\end{figure}	

We use $M=15$ collocation points and $N=5$ basis functions in each direction and compute the first four singular values and right singular vectors of $P^\eps$.  As shown in Fig.~\ref{lsv3515} (top row), they nicely reveal the gyres in their sign structures. The computation of $P^\eps$ takes less than a second, the computation of all singular values and -vectors less than $0.01$ seconds\footnote{Computation times are measured on an $2.6$ GHz Core i5 running Matlab R2015b.}.  For comparison, in the bottom row of Fig.~\ref{lsv3515}, we show the same singular vectors computed via Ulam's method (without explicit diffusion) on a $32 \times 32$ box grid using $100$ sample points per box. Here, the computation of the transition matrix takes around 5 seconds, the computation of the six largest singular values resp.\ vectors less than 0.2 seconds.

\begin{figure*}[tb]
\begin{center}
\begin{tabular}{ccccl}
\includegraphics{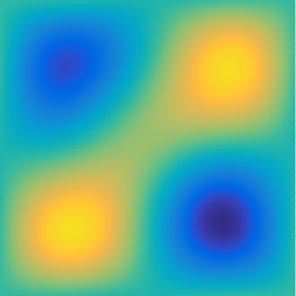} 
&
\tikzsetnextfilename{15_5_eps=001_right_3}
\includegraphics{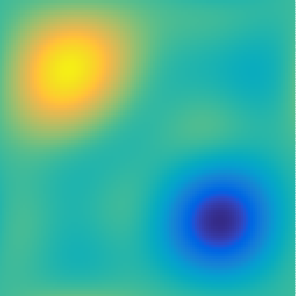} 
&
\tikzsetnextfilename{15_5_eps=001_right_4}
\includegraphics{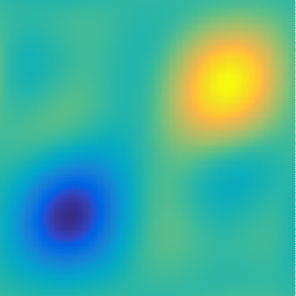} 
&
\tikzsetnextfilename{15_5_eps=001_right_5}
\includegraphics{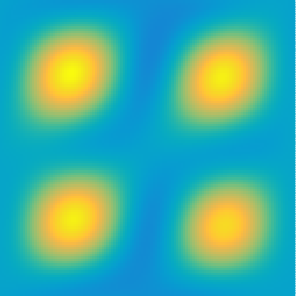} 
&
\raisebox{-0.5cm}{

\tikzsetnextfilename{colorbar2}
\includegraphics{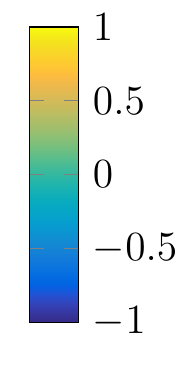} 
}

\\
$\sigma_2= 0.999$
&
$\sigma_2= 0.997$
&
$\sigma_2= 0.996$
&
$\sigma_2= 0.995$
&
\\
\tikzsetnextfilename{Ulam_32_left_2}
\includegraphics{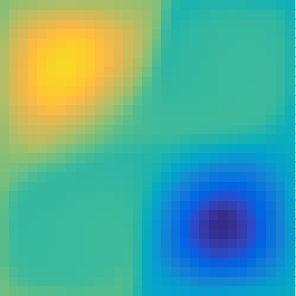} 
&
\tikzsetnextfilename{Ulam_32_left_3}
\includegraphics{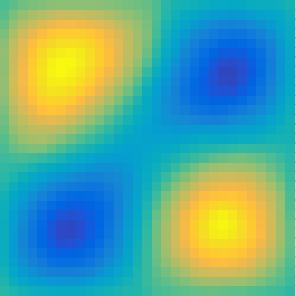} 
&
\tikzsetnextfilename{Ulam_32_left_4}
\includegraphics{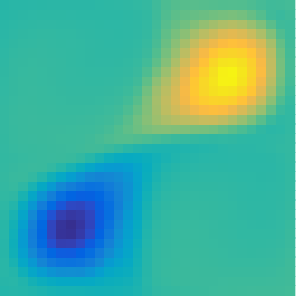} 
&
\tikzsetnextfilename{Ulam_32_left_5}
\includegraphics{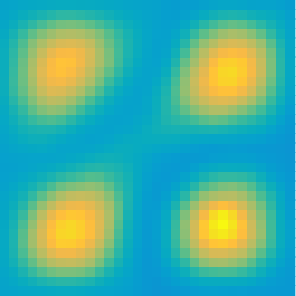} 
&
\raisebox{-0.5cm}{
\tikzsetnextfilename{colorbar2}	
\includegraphics{TikzPictures/colorbar2} 
}
\\
$\sigma_2= 0.996$
&
$\sigma_2= 0.994$
&
$\sigma_2= 0.991$
&
$\sigma_2= 0.985$
&

\end{tabular}
\end{center}
\caption{Top row: 2-th to 5-th right singular vector computed via the Fokker-Planck approach. Bottom row: computed via Ulam's method. }
\label{lsv3515}
\end{figure*}

\subsection{A turbulent flow}

We now turn to a case where the vector field is only given on a discrete grid. Here, the approach proposed in Section~\ref{sec:FokkerPlanck} is particularly appealing if we chose the grid points as collocation points (resp.\ a subset of them):  In contrast to methods which are based on an explicit integration of individual trajectories (as Ulam's method), no further interpolation of the vector field is necessary. Furthermore, depending on the initial point of a trajectory, the small scale structure of the turbulent vector field might enforce very small step sizes of the time integrator and hence makes the computation expensive. 

For the experiment, we consider the incompressible Navier-Stokes equation with constant density on the 2-torus $X=[0,2\pi]^2$,
\begin{align*}
\frac{\partial\mathbf{v}}{\partial t} 
 & =  \nu \Delta \mathbf{v} - (\mathbf{v} \cdot \nabla) \mathbf{v} - \nabla p \\
 \nabla \mathbf{v} & =0,
\end{align*}
where $\mathbf{v}$ denotes the velocity field, $p$ the pressure, and $\nu > 0$ the viscosity. Via introducing the vorticity $w:=\nabla \times \mathbf{v}$, the Navier-Stokes equation in 2D can be rewritten as \emph{vorticity equation} 
\begin{eqnarray}\label{eq:vorticity}
\frac{Dw}{Dt}=\partial_t w+ (u\nabla) w &=& \nu \Delta w\\
\nonumber \Delta\psi &=& -w.
\end{eqnarray}
where the pressure $p$ cancels from the equation. We can extract the velocity field $\mathbf{v}$ from the \emph{streamline function} $\psi$ via $\mathbf{v}_1=\partial_y \psi$ and $\mathbf{v}_2=-\partial_x \psi$.

%
The equation can be integrated by standard methods, e.g.\ a pseudo spectral method as proposed in \cite{MIT_NavierStokes}. For a first experiment, we choose an initial condition inducing three vortices, two with positive and one with negative spin,  as initial condition:
\begin{align*}
w(0,x,y)=
 e^{-5\|(x,y)-(\pi,\frac{\pi}{4})\|_2^2}
+e^{-5\|(x,y)-(\pi,-\frac{\pi}{4})\|_2^2} -\frac{1}{2}
 e^{-\frac{5}{2}\|(x,y)-(\frac{\pi}{4},\frac{\pi}{4})\|_2^2}.
\end{align*}
We solve \eqref{eq:vorticity} on a grid with 64 collocation points in both coordinate direction. For the computation of coherent sets we chose $n=16$ basis functions and $N=32$ collocation points in both directions, as well as $t_0=0$ and $t_1=20$. We hence use only every second collocation point of the computed vector field. However, as the induced coherent structures are way bigger than the grid size, this does not affect the result. The underresolution of the vector field can be interpreted as additional diffusion. We use $\varepsilon=10^{-2}$ which is of the same order as the grid resolution. In Fig.~\ref{vortices_f0}, we show the vector field at time $t_0=0$ (left) as well as the second right singular vector (center) in the first row as well as the vector field at $t_1=20$ and the corresponding second left singular vector (center) in the second row. The computation took 35 seconds.

For comparison, we show the same singular vectors computed via Ulam's method (right) on a $32\times 32$ box grid using 100 sample points per box which were integrated by Matlab's \texttt{ode45}.  Here, we need to interpolate the vector field between the grid points using splines (i.e.\ using \texttt{interp2} in Matlab).  This computation also took 35 seconds.

\begin{figure*}[tb]
\begin{center}
\begin{tabular}{ccc}
\includegraphics[width=4cm]{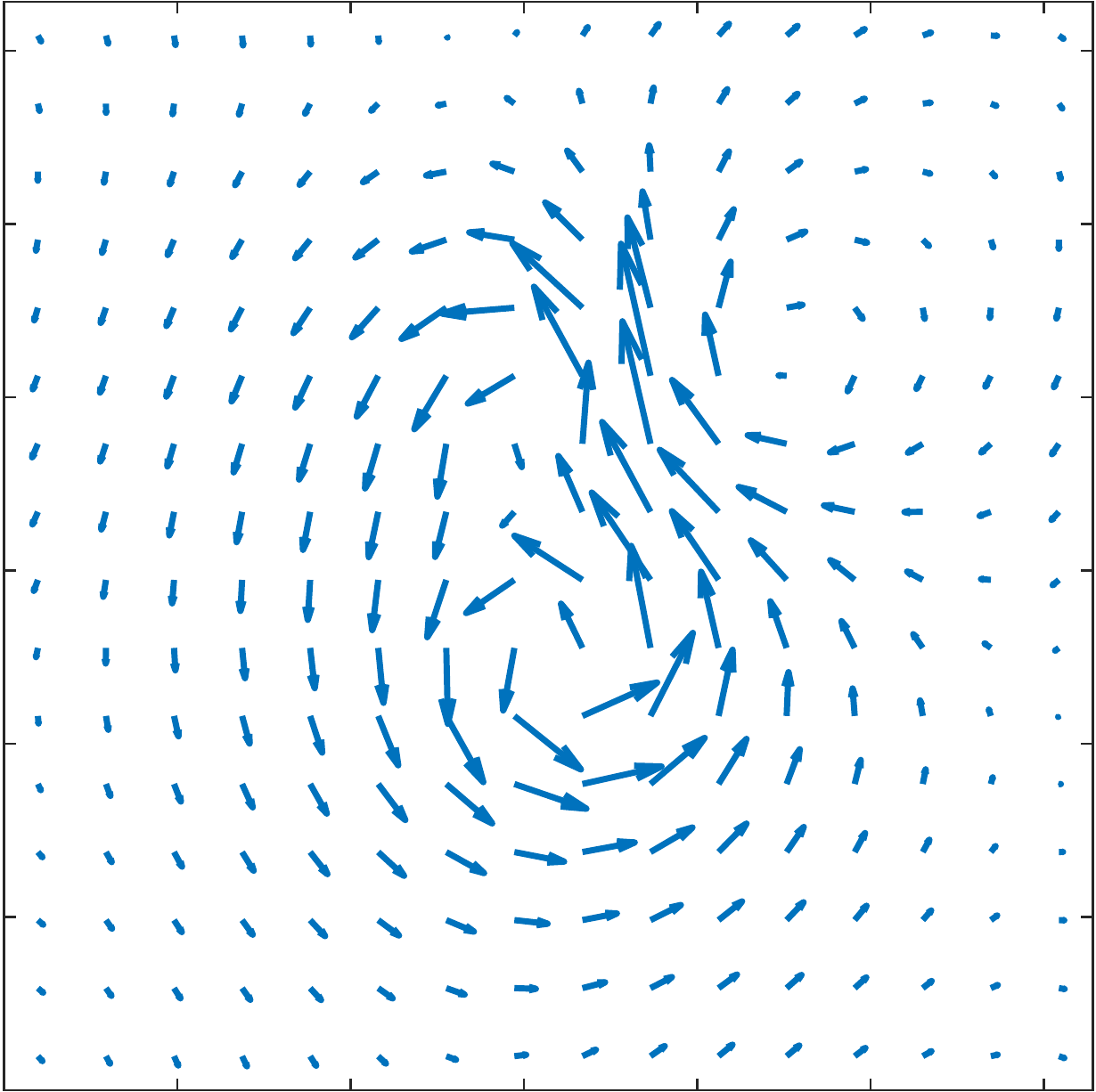}
&
\tikzsetnextfilename{32_16_eps=001_right_2}
\includegraphics{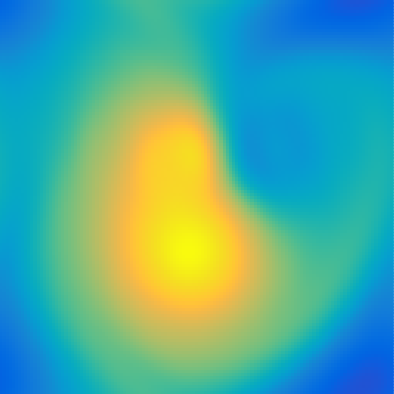} 
&
\tikzsetnextfilename{Ulam_32_left_22}
\includegraphics{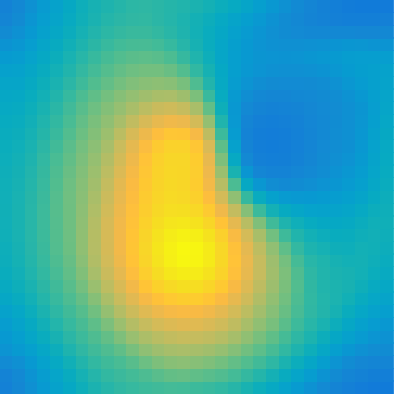} 
\\
\includegraphics[width=4cm]{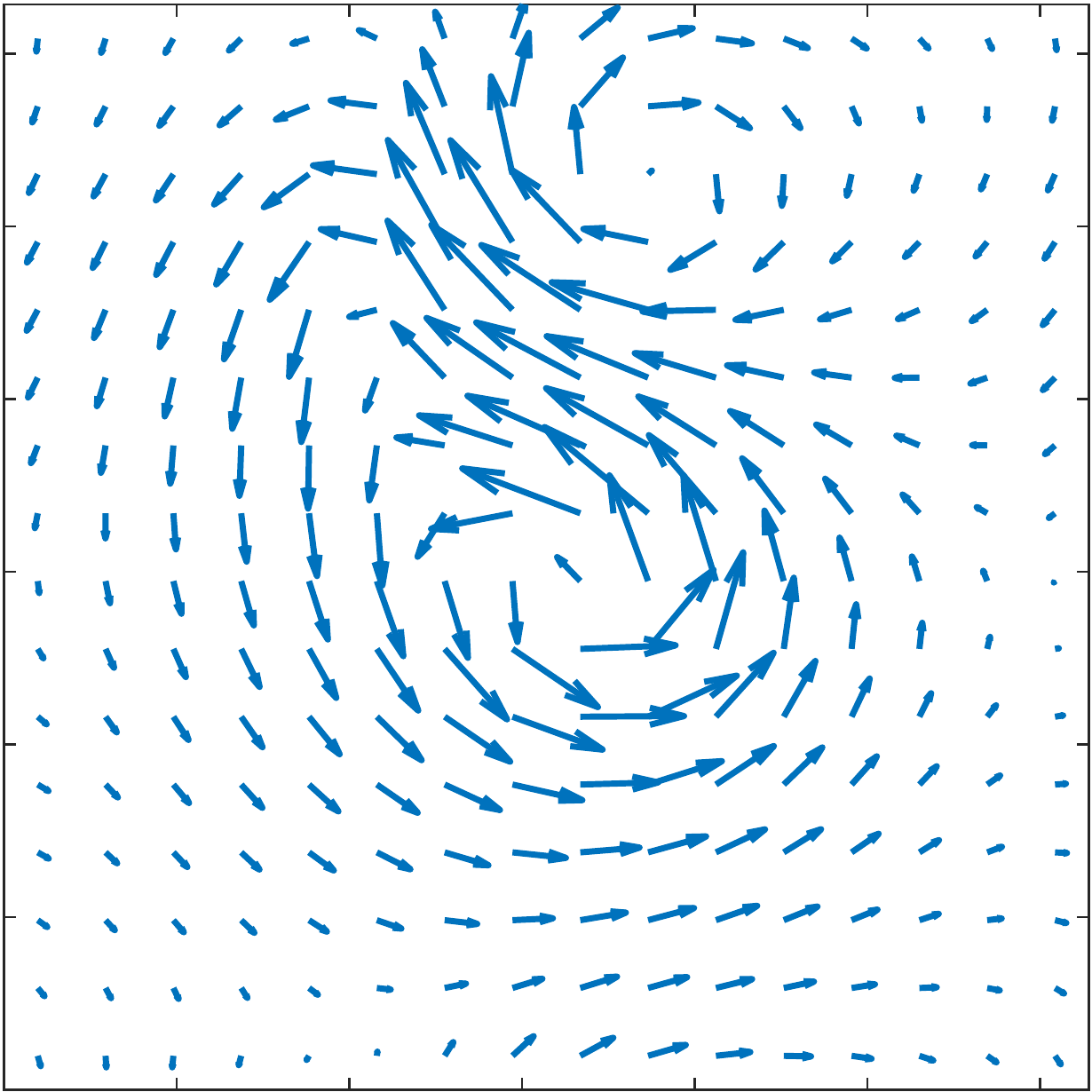}
&
\tikzsetnextfilename{32_16_eps=001_left_2}
\includegraphics{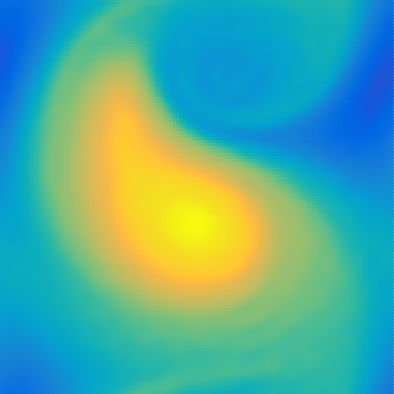} 
&
\tikzsetnextfilename{Ulam_32_right_22}
\includegraphics{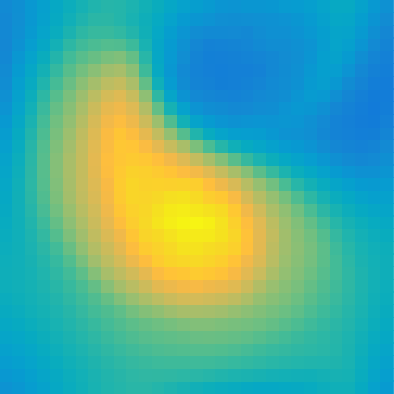} 
\end{tabular}
\end{center}
\caption{Top row: vector field at $t_0=0$ (left), second right singular value computed via Fokker-Planck (center) and via Ulam (right). Bottom row: vector field at $t_1=20$ (left), second left singular vector computed via Fokker-Planck (center) and via Ulam (right).}
\label{vortices_f0}
\end{figure*}

For a second experiment, we use a turbulent initial condition by choosing a real number randomly in $[-1,1]$ from a uniform distribution at each collocation point.  For the coherent set computation, we restrict the time domain to $[t_0,t_1]=[20,40]$ since then the initial vector field has smoothed somewhat, cf.\ Fig.~\ref{turbulent_10}. Here, we choose $M=64$, i.e.\ more collocation points than in the examples before as the vector field lives at smaller scales, $N=16$ and $\varepsilon=10^{-3}$ which is of the same order as $\nu$.  The results are non-obvious pairs of coherent sets, cf.\ Fig.~\ref{turbulent_10}. The maximally coherent set indicated by the second singular vectors describes the vortex in the upper left region. The computation took 100 seconds.

For comparison, we show the same singular vectors computed via Ulam's method on a $32\times 32$ box grid using 25 sample points per box. Here, we need to interpolate the vector field between the grid points using splines (i.e.\ using \texttt{interp2} in Matlab). Since the vector field is turbulent, using Matlab's \texttt{ode45} for the vectorized system is infeasible.  We therefore choose a fixed time-step of $h=0.01$, such that the result does not seem to change when further decreasing $h$.  This computation also took roughly 100 seconds.

\begin{figure*}[tb]
\begin{center}
\begin{tabular}{cccc}
\includegraphics[width=3cm]{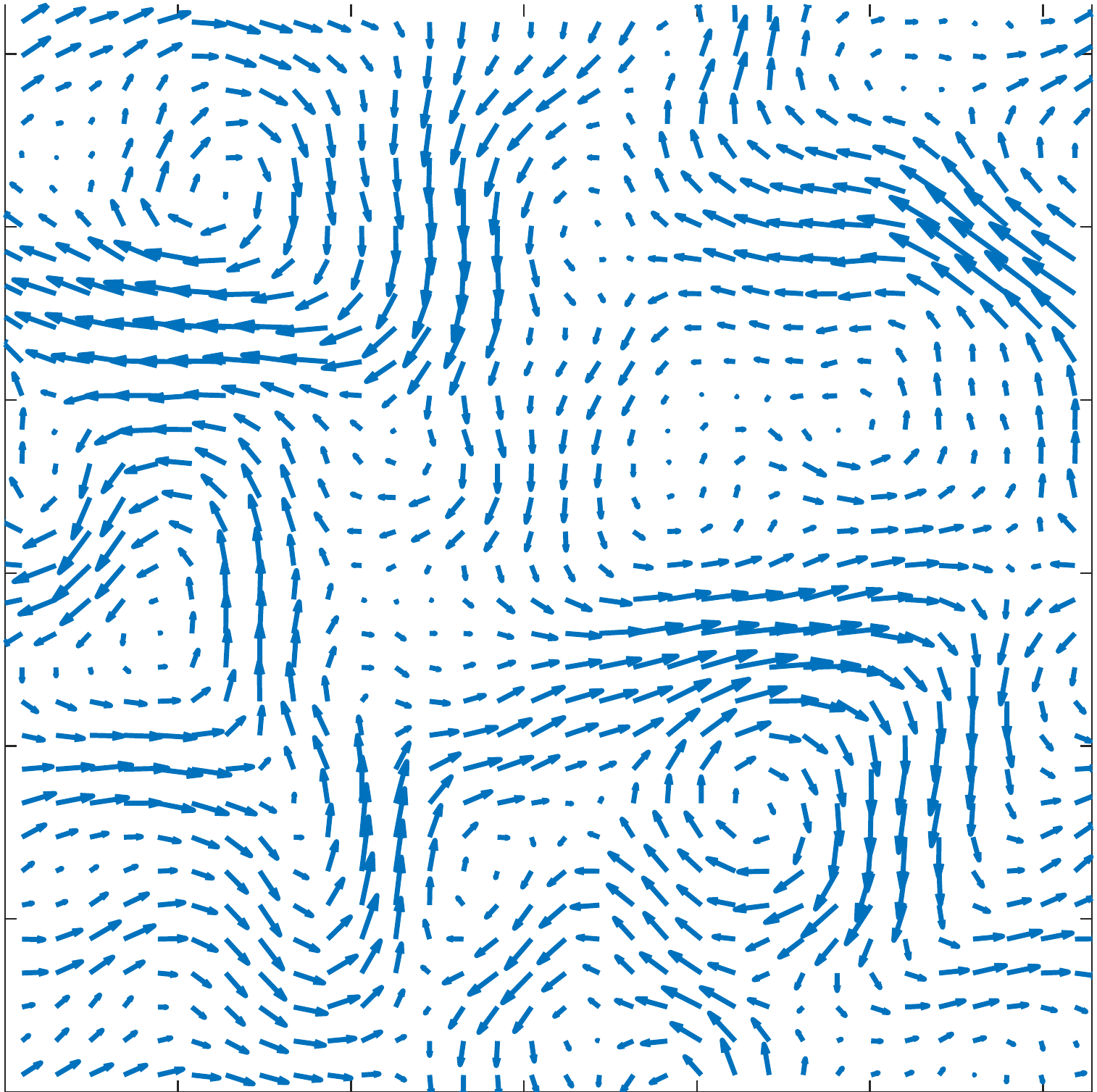}
&
\tikzsetnextfilename{64_16_eps=001_right}
\includegraphics{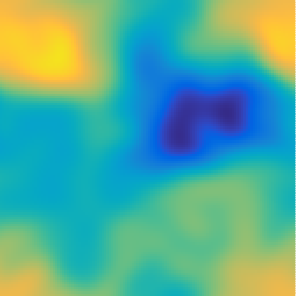} 
&
\tikzsetnextfilename{Ulam_32_left_23}
\includegraphics{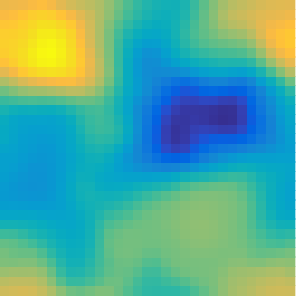} 
&
\tikzsetnextfilename{uncoherent_t0}
\includegraphics{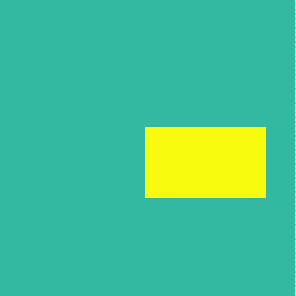} 
\\
\includegraphics[width=3cm]{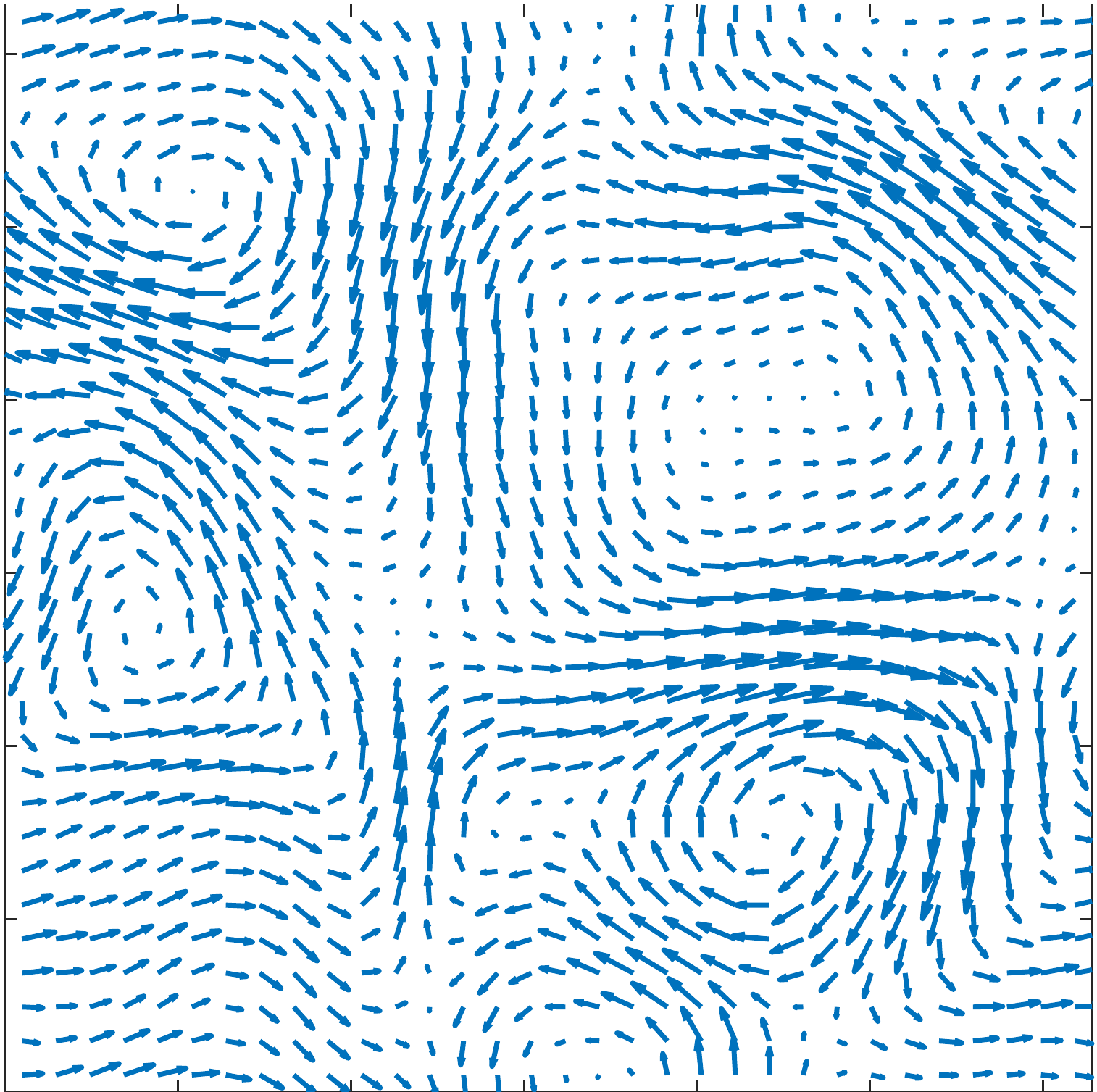}
&
\tikzsetnextfilename{32_16_eps=001_left}
\includegraphics{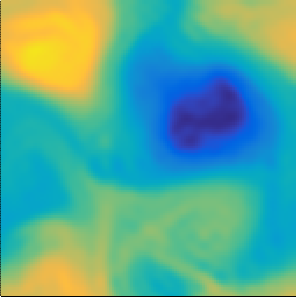} 
&
\tikzsetnextfilename{Ulam_32_right_23}
\includegraphics{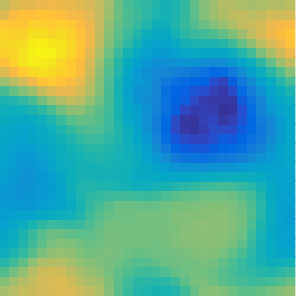} 
&
\tikzsetnextfilename{uncoherent_tf}
\includegraphics{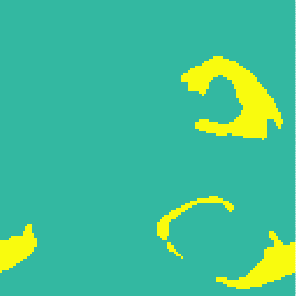} 

\end{tabular}
\end{center}
\caption{Top row: vector field at $t_0=20$ (left), second right singular vector computed via Fokker-Planck (center left) and via Ulam (center right) and an incoherent set (right). Bottom row: vector field at $t_1=40$ (left), second left singular vector computed via Fokker-Planck (center left) and via Ulam (center right) and the evolution of the incoherent set (right).}
\label{turbulent_10}
\end{figure*}

\subsection{Octuple gyre}

Finally, based on the quadruple gyre, we construct the following flow in $\R^3$ with eight gyres, given by the equations 
\begin{align*}
\label{ninefold gyre}
\dot{x}&= g(t,x,y) - g(t,x,z) \\
\dot{y}&= g(t,y,y) - g(t,y,x) \\
\dot{z}&= g(t,z,x) - g(t,z,y)
\end{align*}
on the 3-torus $X=[0,2]^3$ with $g$ and the other parameters from Section 6.1. By construction the dynamics of this system exhibits eight gyres in each quadrant of the cube $[0,2]^3$. The cross sections of this vector field at, e.g.\ $\{x=0.5\}$ and $\{x=1.5\}$ are  again given by Fig.~\ref{vectorfield}.  

In Fig.~\ref{cs3D23} we show the second to fourth left singular vectors, computed  using 16 collocation points and 4 basis functions in each direction with $\eps=0.1$. Interestingly, none of the obvious gyres is identified by  the second and the third singular vectors, but two coherent sets with `centers' at $[1,1,1]$ and $[2,2,2]$. This is unexpected as this set is not encoded in the vector field on purpose.  Starting with the fourth singular vector, the gyre centers are identified as coherent.  In Fig.~\ref{cs3D23left} we show the corresponding right singular vectors at time $t_1=10.25$. The computation time is $30$ seconds.

\begin{figure}
\begin{minipage}{0.32\textwidth}
\centering \def\svgwidth{170pt}
\includegraphics[width=\textwidth]{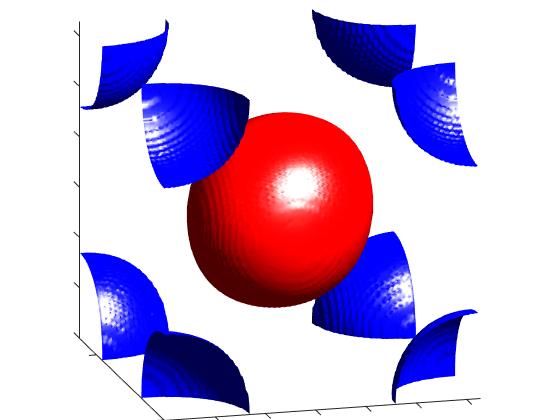}
\end{minipage}
\begin{minipage}{0.32\textwidth}
\centering \def\svgwidth{170pt}
\includegraphics[width=\textwidth]{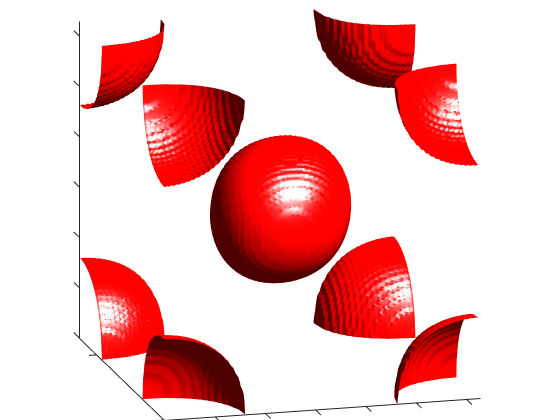}
\end{minipage}
\begin{minipage}{0.32\textwidth}
\centering \def\svgwidth{170pt}
\includegraphics[width=\textwidth]{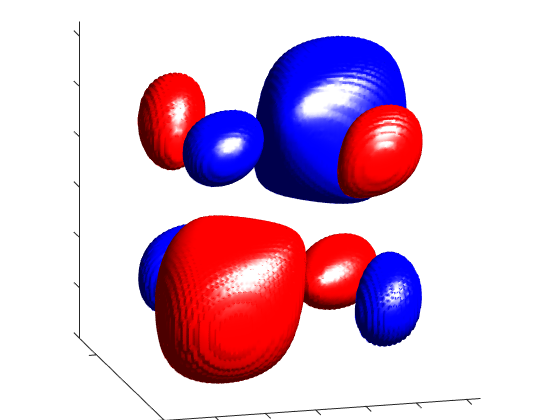}
\end{minipage}
\caption{Octuple gyre: Second to fourth right singular vectors at time $t_0=0$ (red positive, blue negative level set).}
\label{cs3D23}
\end{figure}

\begin{figure}
\begin{minipage}{0.32\textwidth}
\centering \def\svgwidth{170pt}
\includegraphics[width=\textwidth]{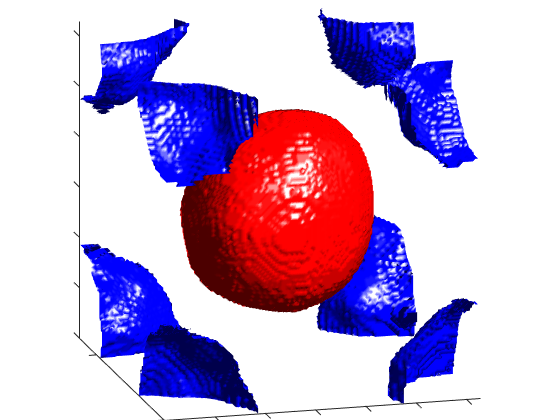}
\end{minipage}
\begin{minipage}{0.32\textwidth}
\centering \def\svgwidth{170pt}
\includegraphics[width=\textwidth]{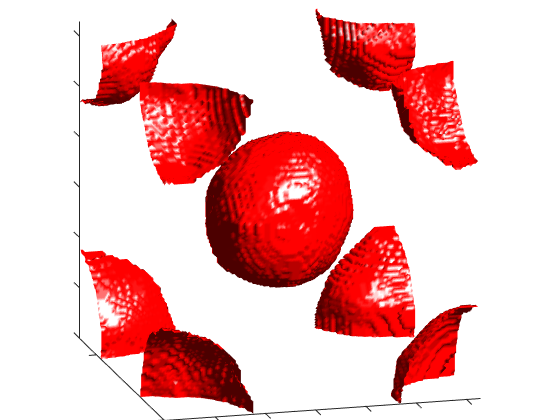}
\end{minipage}
\begin{minipage}{0.32\textwidth}
\centering \def\svgwidth{170pt}
\includegraphics[width=\textwidth]{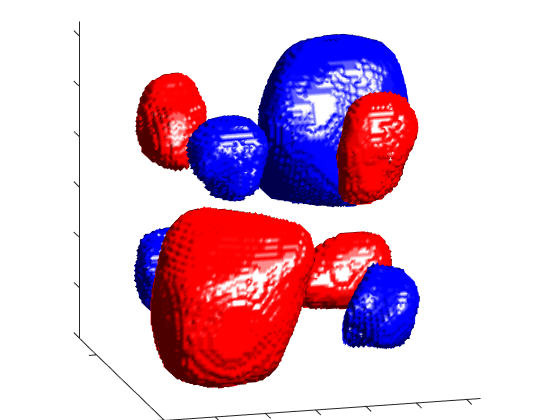}
\end{minipage}
\caption{Octuple gyre: Second to fourth left singular vectors at time $t_1=10.25$.}
\label{cs3D23left}
\end{figure}

\section{Conclusion and future directions}

We proposed to compute transfer operators in time-variant flows fields by a direct integration of the associated Fokker-Planck equation using spectral methods.  In particular, this approach does not require to integrate Lagrangian trajectories, which is particularly beneficial if the underlying flow field is only given by (a grid of) data. However, the spectral method described here is restricted to periodic domains. While it might be possible to treat cubical domains via pseudo spectral methods, a different approach for more complicated domains will be needed.

Further investigations will deal with the question of how to use other (spectral) bases such that the resulting transfer matrix is sparse, how to incorporate the information at intermediate times and how to apply the basic idea to other types of diffusion.

\section*{Acknowledgement}
The authors acknowledge the support by the DFG Collaborative Research Center SFB/TRR 109 "Discretization in Geometry and Dynamics". A.D. acknowledges the support by the "International Helmholtz Graduate School for Plasma Physics (HEPP)".

\appendix

\section*{Appendix}

\section{Proof of Lemma \ref{lemma:Pe_compact}}

\begin{proof}
We prove the lemma in the slightly more general case that $\divg(b)\neq 0$. $\|b\|_{C^0}:=\sup_{s\in [t_0,t_1], \ x \in X}\left\{|b(t,x)|<\infty,i=1,\ldots, d\right\}$. Let $u=u(t,x)$ be the solution of \eqref{FokkerPlanck} with initial condition $f=u_0\in {L}^2(X)$. Without loss of generality we set $\varepsilon=\sqrt{2}$ and $t_0=0$. We first note that $\|u\|_2$ is bounded by $\|u_0\|_2$:
\begin{equation}
\begin{split}
\frac{1}{2}\frac{d}{dt} \|u\|_2^2
& = \langle u, \partial_t u\rangle 
  = \langle u, \Delta u+\divg{(ub)}\rangle
  = \langle u, \Delta u\rangle + \langle u, \divg{(ub)}\rangle \\
&  = -\|\nabla u\|_2^2 - \langle \nabla u, ub\rangle
  = -\|\nabla u\|_2^2 - \langle \frac{1}{2} \nabla (u^2), b\rangle \\
&  = -\|\nabla u\|_2^2 -  \frac{1}{2} \langle u^2, \divg{(b)}\rangle
   \leq  -\|\nabla u\|_2^2 +  \frac{1}{2} \|u\|_2^2\|b\|_{C^1}.
\end{split}
\label{ineq1}
\end{equation}
Gronwall's inequality thus implies that for all $t>0$ 
\begin{align}
\|u\|_2^2\leq e^{t\|b\|_{C^1}}\|u_0\|_2^2.
\label{ineq2}
\end{align}
We now show that also $\|\nabla u\|_2$ is bounded by $\|u_0\|_2$. Because of \eqref{ineq1} we have
\begin{align*}
\|\nabla u\|_2^2\leq -\frac{1}{2} \frac{d}{dt}\|u\|_2^2+\frac{1}{2}\|u\|_2^2 \|b\|_{C^1}.
\end{align*}
Integrating from $t=t_0=0$ to $t=t_1$ we obtain
\begin{equation}
\begin{split}
\int_{0}^{t_1} \|\nabla u\|_2^2 \; dt
& \leq \frac{1}{2}\left( \|u_0\|_2^2 - \|u(t_1)\|_2^2\right) +\frac{1}{2}  \|b\|_{C^1} \int_{0}^{t_1} \|u\|_2^2\; dt\\
& \overset{\eqref{ineq2}}{\leq} \frac{1}{2} \|u_0\|_2^2+\frac{1}{2}  \|b\|_{C^1} \int_{0}^{t_1} e^{t\|b\|_{C^1}}\|u_0\|_2^2 ds\\
&= \frac{1}{2} e^{t_1\|b\|_{C^1}}\|u_0\|_2^2.
\end{split}
\label{ineq3}
\end{equation}
Therefore there is at least one $t^*\in [0,t_1]$, such that
\begin{align}
\|\nabla u(t^*,\cdot)\|_2^2 &\leq \frac{1}{2t_1} e^{t_1\|b\|_{C^1}}\|u_0\|_2^2,
\label{ineq4}
\end{align}
and we finally get
\begin{align*}
\frac{1}{2 }\frac{d}{dt} \|\nabla u\|_2^2 
& = \frac{1}{2} \frac{d}{dt}\langle \nabla u,\nabla u\rangle 
  = \frac{1}{2} \langle \frac{d}{dt}\nabla u, \nabla u\rangle+ \frac{1}{2} \langle \nabla u, \frac{d}{dt} \nabla u\rangle\\
&  = -\langle \Delta u, \partial_t u\rangle 
  = -\langle \Delta u,\Delta u+ \nabla (ub )\rangle\\
& = -\|\Delta u\|_2^2 - \langle \Delta u, \nabla u b\rangle -\langle \Delta u, u\divg{(b)}\rangle\\
&\overset{C-S}{\leq}  -\|\Delta u\|_2^2 +\|\Delta u\|_2\|\nabla u\|_2 \|b\|_{C^0}+\|\Delta u\|_2 \| u\|_2 \|b\|_{C^1}\\
&\leq  \frac{1}{2}\|\nabla u\|_2^2 \|b\|_{C^0}^2+ \frac{1}{2}  \| u\|_2^2 \|b\|_{C^1}^2,
\end{align*}
where we used that  $\frac{1}{2}(a^2c^2+d^2e^2)\geq-b^2+bac+bde$ for $a,b,c,d,e\in \mathbb{R}$.

Integration from $t=t^*$ to $t=t_1$ yields
\begin{equation}
\begin{split}
 \|\nabla u(t_1)\|_2^2
 &  \leq  \|\nabla u(t^*)\|_2^2 +\int_{t^*}^{t_1} \|\nabla u\|_2^2 \|b\|_{C^0}^2 \; dt + \int_{t^*}^{t_1}  \| u\|_2^2 \|b\|_{C^1}^2\; dt\\
& \hspace{-0.4cm}\overset{\eqref{ineq2}\eqref{ineq3} \eqref{ineq4}   }{\leq}  
\frac{1}{2t_1} e^{t_1\|b\|_{C^1}}\|u_0\|_2^2+ \|b\|_{C^0}^2 \frac{1}{2}e^{t_1\|b\|_{C^1}}\|u_0\|_2^2\\
& \phantom{\leq} + \|b\|_{C^1}^2 \int_{t^*}^{t_1} e^{t\|b\|_{C_1}} \|u_0\|_2^2 \;dt\\
& \leq\left(  \frac{1}{2t_1} + \frac{1}{2}\|b\|_{C^0}^2 + \|b\|_{C^1}  \right) \|u_0\|_2^2e^{t_1\|b\|_{C^1}}.
\end{split}
\label{ineq5}
\end{equation}
Combining equations \eqref{ineq2} and \eqref{ineq5} we obtain 
\begin{align*}
\|\mathcal{P}^\eps u_0 \|_{H^1}  =\|u(t_1,\cdot)\|_{H^1}\leq \left( 1+  \frac{1}{2t_1} + \frac{1}{2}\|b\|_{C^0}^2 + \|b\|_{C^1}  \right) e^{t_1\|b\|_{C^1}} \|u_0\|_2^2.
\end{align*}
Hence $\mathcal{P}^\eps$ maps bounded sets in ${L}^2(X)$ onto bounded sets in $H^1(X)$. As the embedding of $H^1(X)$ onto $L^2(X)$ is compact by Rellich's theorem, $\mathcal{P}^\eps$ is a compact operator. \hfill
\end{proof}

\newpage

\section{MATLAB code for the quadruple gyre example}

\begin{Code}[h]
\lstinputlisting[breaklines=true,numbers=left,linerange={1-46,50}]{code/example1/quadgyre_fokker.m}

\end{Code}

\newpage

\bibliographystyle{abbrv}
\bibliography{refs}

\end{document}